%
% Version of September 12, 2022 
% 
%  
%
% Last changes by DR
%
\documentclass[11pt]{article}
\usepackage[latin1]{inputenc}
\usepackage{amsmath,amsthm,amssymb}
\usepackage{amsfonts}
\usepackage{amsmath,amsthm,amssymb,amscd}
\usepackage{latexsym}
\usepackage{color}
\usepackage{graphicx}
\usepackage{mathrsfs}

\textwidth172mm \textheight22cm \hoffset-24mm \voffset-20mm

\makeatletter \@addtoreset{equation}{section} \makeatother

\setlength{\parindent}{1em}

\newtheorem{theorem}{Theorem}[section]

\newtheorem{lemma}{Lemma}[section]
\newtheorem{remark}{Remark}[section]

%\usepackage{refcheck}
% \nocite{*} 
% 
\begin{document}
\title{\bf  On critical exponential Kirchhoff systems on the Heisenberg group} 
\author{\bf Shiqi Li$^{a}$, Sihua Liang$^{a}$ and Du\v{s}an D. Repov\v{s}$^{b,c,d,}$\thanks{Corresponding author (D.D. Repov\v{s}) } 
\thanks{{\it E-mail address:} lishiqi59@126.com (S. Li),   liangsihua@163.com (S. Liang), dusan.repovs@guest.arnes.si (D.D. Repov\v{s})}
\thanks{{ORCID ID:} https://orcid.org/0000-0002-7853-2862  (S. Li),   https://orcid.org/0000-0002-4260-7762  (S. Liang), https://orcid.org/0000-0002-6643-1271 (D.D. Repov\v{s})}
\\
$^{\small\mbox{a}}${\small College of Mathematics, Changchun Normal University, Changchun, 130032,  P.R. China }\\[-0.3cm]
$^{\small\mbox{b}}${\small Faculty of Education, University of Ljubljana, Ljubljana, 1000, Slovenia}\\[-0.3cm]
$^{\small\mbox{c}}${\small Faculty of Mathematics and Physics, University of Ljubljana, Ljubljana, 1000, Slovenia}\\[-0.3cm]
$^{\small\mbox{d}}${\small Institute of Mathematics, Physics and Mechanics, Ljubljana, 1000, Slovenia}
}
\date{}
\maketitle
\begin{abstract}
In this paper,  existence of solutions  is
established for
critical exponential  Kirchhoff systems on the Heisenberg group by
using the variational method. The novelty of our paper is that not
only the nonlinear term has critical exponential growth, but also that
Kirchhoff function  covers the  degenerate case. Moreover, our result
is new even for the Euclidean case.

\medskip
\emph{\it Keywords:} Kirchhoff system; Heisenberg group; Critical
exponential growth; Variational method.

\medskip
\emph{\it Mathematics Subject Classification (2020):} 35J20; 35R03; 46E35.
\end{abstract}

\section{Introduction}\label{sec1}
In this paper, we are interested in  critical exponential
Kirchhoff systems on the Heisenberg group $\mathbb{H}^{n}$:
\begin{equation}\label{e1.1}
\begin{cases}-K_1(\int_{\Omega}|\nabla_{\mathbb{H}^{n}} u|^{Q}d\xi)\Delta_{Q} u =\lambda f_1(\xi, u, v),  & \hbox{for} \  \xi\in \Omega, \\
-K_2(\int_{\Omega}|\nabla_{\mathbb{H}^{n}} v|^{Q}d\xi)\Delta_{Q} v =\lambda f_2(\xi, u, v), & \hbox{for} \  \xi\in  \Omega,  \\
u = v =0, &  \hbox{for} \ \xi\in \partial \Omega,\end{cases}
\end{equation}
where $\Delta_{Q}$ is the $Q$-Laplacian operator on the Heisenberg group  $\mathbb{H}^{n}$, defined by
$$\Delta_{Q}(\cdot)=\operatorname{div}_{\mathbb{H}^{n}}(\left|\nabla_{\mathbb{H}^{n}}(\cdot)\right|_{\mathbb{H}^{n}}^{Q-2}
\nabla_{\mathbb{H}^{n}}(\cdot)),$$ $\Omega$ is a bounded open smooth
 subset of the Heisenberg group $\mathbb{H}^{n},$ and $\lambda >0$
is a positive parameter.

There are already several interesting papers devoted to the study of
the Heisenberg group  $\mathbb{H}^{n}$. For example, Pucci and
Temperini \cite{pu2} studied the existence of entire nontrivial
solutions of $(p, q)$ critical systems on the Heisenberg group
$\mathbb{H}^{n}$, by using the variational methods and the
concentration-compactness principle. In Pucci and Temperini
\cite{pu1}, they accomplished the conclusions of Pucci and Temperini
\cite{pu2} and worked out some kind of elliptic systems involving
critical nonlinearities and Hardy terms on the Heisenberg group
$\mathbb{H}^{n}$. There are additional interesting results in 
Liang and Pucci
 \cite{liang},  Pucci \cite{pu4},
Pucci \cite{ pu3} and  Pucci and Temperini \cite{ pu5}.

In the Euclidean case, Kirchhoff-type  problems have attracted wave after wave
of scholars. In $1883$, Kirchhoff \cite{ki} established a
model given by the  hyperbolic equation
$$\rho\frac{\partial ^2u}{\partial
t^2}-\left(\frac{p_0}{h}+\frac{E}{2L} \int_0^L\left|\frac{\partial
u}{\partial x}\right|^2dx\right) \frac{\partial ^2u}{\partial
x^2}=0,$$
where parameters $\rho$, $p_0$, $h$, $E$, $L$ are constants  with
some physical meaning, which extends the classical
  D'Alembert wave equation for free vibrations of elastic strings.
  In particular, Kirchhoff equation models also appear in physical and biological systems.
 We refer the reader to Alves et al. \cite{al}
 for more details.  After Kirchhoff's work,  Figueiredo and Severo $\cite{fise}$ studied the following problem
$$\left\{\begin{array}{cl}
&-m\left(\int_{\Omega}|\nabla u|^{2} d x\right) \Delta u=f(x, u)  \text { in } \Omega, \\
&u=0  \text { on } \partial \Omega,
\end{array}\right.$$
and addressed the existence of ground state solutions of the
problems on $\mathbb{R}^{2}$ by using the minimax techniques with
the Trudinger-Moser inequality.  Mingqi et al. \cite{MR}
studied the existence and multiplicity of solutions for a class of
perturbed fractional Kirchhoff type problems with singular
exponential nonlinearity.  Alves and Boudjeriou \cite{al2}
obtained the existence of a nontrivial solution for a class of
nonlocal problems by using the dynamical methods. For several
interesting results recovering the Kirchhoff-type problems, we refer
to Ambrosio et al. \cite{am}, Caponi and Pucci \cite{capu},
Mingqi et al. \cite{XMTZ}, and Pucci et al. \cite{PXZ1}, and the
references therein. 

Recently, some authors have focused their attention to the problem
with critical exponential growth in the Euclidean case, see
Aouaoui \cite{ao},
Albuquerque et al. \cite{de},
Moser \cite{Mo}, and
Trudinger \cite{Tr}.
 In the Heisenberg group  $\mathbb{H}^{n}$ case,
 Cohn and Lu \cite{co} have established a new version of the Trudinger-Moser inequality:
Let $\Omega \subset \mathbb{H}^{n}$ and assume that $|\Omega|<\infty$ and
$0<\alpha \leq \alpha_{Q}.$
Then
\begin{equation}\label{e1.2}
\sup _{u \in W_{0}^{1, Q}(\Omega),\left\|\nabla_{\mathbb{H}^{n}}
u\right\|_{L^{Q}(\Omega)} \leq 1} \frac{1}{|\Omega|} \int_{\Omega}
e ^{\alpha|u(\xi)|^{\frac{Q}{Q-1}} }d \xi \leq
C_{0}<\infty,
\end{equation}
where $C_{0}>0$ is a constant which depends only on $Q=2 n+2$.
Moreover, $$\alpha_{Q}=Q \sigma_{Q}^{1 /(Q-1)} \quad \text { and }
\quad \sigma_{Q}=\int_{\rho(z, t)=1}|z|^{Q} d \mu.$$ Deng and Tian
\cite{den} have established the existence of nontrivial solutions
for the non-degenerate Kirchhoff elliptic system with nonlinear term
have critical exponential growth. Although the study of critical
Kirchhoff-type problems is more meaningful, there are some  authors
working on the degenerate Kirchhoff problem. From a physical point
of view, the fact that $M(0)=0$ means that the base tension of the
string is zero, which is a very realistic model. To the best of our
knowledge, the existence results for system  \eqref{e1.1} in
the degenerate case are not yet known for the Heisenberg group
$\mathbb{H}^{n}$.

For these reasons, we mainly consider the critical exponential
Kirchhoff systems \eqref{e1.1} on the Heisenberg group. We say that $f_i$
satisfies critical exponential growth at $+\infty$ provided that there exists
$\alpha_0 > 0$ such that
\begin{eqnarray}\label{e1.3}\lim\limits _{|(u, v)|
\rightarrow+\infty} \frac{|f_{i}(\xi, u, v)|}{e^{{\alpha|(u,
v)|^{\frac{Q}{Q-1}}}}}=
\begin{cases}0 &
\text { uniformly on } \xi \in \Omega,$ for all $\alpha>\alpha_{0}; \\
+\infty & \text { uniformly on } \xi \in \Omega,$ for all $\alpha<\alpha_{0}.\end{cases}\end{eqnarray} In view of the critical
exponential
 growth of the nonlinear terms $f_{i}$, we work out the problem of the "lack of
compactness" by using  a new version of
the Trudinger-Moser
inequality for the Heisenberg group $\mathbb{H}^{n}$.

Throughout the paper, Kirchhoff-type functions $K_i$ and  $f_i$ will
satisfy the following conditions:
\begin{itemize}
\item[$(\mathcal{K})$]
$(\mathcal{K}_1)$ There exist $q\in[1,n)$ and $\sigma \in [1, q),$ satisfying
$$ \sigma\mathcal{K}_{i}(t):=\sigma\int_{0}^{t} K_{i}(s) ds\geq
K_{i}(t)t,
\quad
\hbox{for all}
\quad
 t\geqslant 0.$$
$(\mathcal{K}_2)$ There exist $k_1,k_{2} > 0$ such that $K_{i}(t)
\geq k_i t^{\sigma-1}$, where $K(0) = 0,$ for all $t \geqslant 0$.
\item[$(\mathcal{F})$]
$(F_1)$   $\lim _{w \rightarrow 0}
\frac{f_{i}(\xi,w)}{|w|^{Q-1}}=0,$ for all $w=(u, v)$ and
$|w|=\sqrt{u^{2}+v^{2}}$.

$(F_2)$ $F \in C^{1}(\mathbb{R} \times \mathbb{R}, \mathbb{R})$  and
there exists $\mu >\sigma Q$ such that $$ 0<\mu F(\xi,w) \leqslant
\nabla F(\xi,w) w, \ \hbox{ for all} \ w \in \mathbb{R}^{2}, \ \hbox{
where} \ \nabla F=\left(f_{1}, f_{2}\right).$$ $(F_3)$ $ \liminf _{w
\rightarrow 0^{+}} \frac{F(\xi, w)}{|w|^\mu}=: \eta>0.$
\end{itemize}
\begin{remark}\label{rmk1.0}
Note that some typical examples of functions $K_{i}:\mathbb{R}_{+}
\rightarrow \mathbb{R}_{+}$ which are nondecreasing and satisfy conditions
 $(\mathcal{K}_1)$ and $(\mathcal{K}_2)$, are given by
$$K_{i}(t)=a_{i}+b_{i} t^{\sigma-1},
\
\hbox{ for all}
\
t \in \mathbb{R},
\
\hbox{ where}
\
a
\in \mathbb{R}, b \in \mathbb{R}
\
\hbox{ and}
\
a+b>0.$$
  Clearly, conditions
$(\mathcal{K}_1)$ and $(\mathcal{K}_2)$  also cover the degenerate case, that
is when $a=0$.
\end{remark}
\begin{remark}\label{rmk1.1}
By condition $\left(F_{2}\right)$, we can easily get
that
$F(\xi, w) / |w|^\mu$ is nondecreasing for all $w>0$. Thus, for all $w
\geqslant 0, $ we obtain $ F(\xi, w) \geqslant \eta |w|^\mu$ by
invoking condition $\left(F_{3}\right)$.
\end{remark}

The main result of this paper is as follows.
\begin{theorem}\label{the1.1} Assume that conditions $(\mathcal{K})$ and $(\mathcal{F})$ are satisfied
and that
 the nonlinear terms $f_{i}$ have critical
 exponential growth. Then system $\eqref{e1.1}$ has at least one nontrivial solution for all $\lambda>0$, when $\eta>0$ from condition $\left(F_{3}\right)$ is large enough.
\end{theorem}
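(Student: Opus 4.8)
\medskip\noindent\textit{Proof strategy.} The plan is to realise \eqref{e1.1} as an Euler--Lagrange problem and apply the Mountain Pass Theorem, recovering the compactness lost to the critical growth by means of the Cohn--Lu inequality \eqref{e1.2}. I would work in the product space $\mathbb{W}:=W_{0}^{1,Q}(\Omega)\times W_{0}^{1,Q}(\Omega)$ with norm $\|(u,v)\|^{Q}=\|\nabla_{\mathbb{H}^{n}}u\|_{L^{Q}(\Omega)}^{Q}+\|\nabla_{\mathbb{H}^{n}}v\|_{L^{Q}(\Omega)}^{Q}$, and with the functional
\[
J_{\lambda}(u,v)=\frac{1}{Q}\,\mathcal{K}_{1}\!\left(\|\nabla_{\mathbb{H}^{n}}u\|_{L^{Q}(\Omega)}^{Q}\right)+\frac{1}{Q}\,\mathcal{K}_{2}\!\left(\|\nabla_{\mathbb{H}^{n}}v\|_{L^{Q}(\Omega)}^{Q}\right)-\lambda\int_{\Omega}F(\xi,u,v)\,d\xi,
\]
whose critical points are precisely the weak solutions of \eqref{e1.1}; that $J_{\lambda}\in C^{1}(\mathbb{W},\mathbb{R})$ follows from \eqref{e1.2}, $(F_{1})$ and \eqref{e1.3}.

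\medskip\noindent\textit{Mountain pass geometry.} Near the origin I would first integrate $(F_{2})$ along rays to get $F(\xi,w)\le C|w|^{\mu}$ for $|w|\le 1$, where $\mu>\sigma Q$ by $(F_{2})$. Splitting $\int_{\Omega}F$ over $\{|w|\le 1\}$ (controlled by the Sobolev embedding of $W_{0}^{1,Q}(\Omega)$) and $\{|w|>1\}$ (controlled by \eqref{e1.3} together with \eqref{e1.2}, which applies once $\|(u,v)\|$ is small), and using $\mathcal{K}_{i}(t)\ge (k_{i}/\sigma)t^{\sigma}$ from $(\mathcal{K}_{2})$, one obtains $J_{\lambda}(u,v)\ge c_{1}\|(u,v)\|^{\sigma Q}-\lambda c_{2}\|(u,v)\|^{\mu}-\lambda c_{3}\|(u,v)\|^{q_{1}}$ for $\|(u,v)\|$ small, where $q_{1}>\sigma Q$ is at our disposal; since $\mu>\sigma Q$ and $q_{1}>\sigma Q$, there are $\rho,\alpha>0$ with $J_{\lambda}\ge\alpha$ on $\|(u,v)\|=\rho$. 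For the far end, fix $\phi\in W_{0}^{1,Q}(\Omega)$, $\phi\ge 0$, $\phi\ne 0$, and set $w_{0}=(\phi,\phi)$; the growth bound $\mathcal{K}_{i}(t)\le\mathcal{K}_{i}(1)t^{\sigma}$ for $t\ge 1$ (a consequence of $(\mathcal{K}_{1})$) and the lower bound $F(\xi,w)\ge\eta|w|^{\mu}$ from Remark~\ref{rmk1.1} give $J_{\lambda}(tw_{0})\to-\infty$ as $t\to+\infty$, because $\mu>\sigma Q$. The Mountain Pass Theorem then yields a $(PS)_{c_{\lambda}}$ sequence at a level $c_{\lambda}\ge\alpha>0$.

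\medskip\noindent\textit{Level estimate and compactness.} The largeness of $\eta$ enters through the bound $c_{\lambda}\le\max_{t\ge 0}J_{\lambda}(tw_{0})\le\max_{t\ge 0}\!\left(Ct^{\sigma Q}-\lambda\eta\,c_{4}\,t^{\mu}\right)=O\!\left((\lambda\eta)^{-\sigma Q/(\mu-\sigma Q)}\right)$, which tends to $0$ as $\eta\to+\infty$; hence, for each fixed $\lambda>0$, we may choose $\eta$ so large that $c_{\lambda}<M_{*}$, with $M_{*}$ the compactness threshold identified now. For a $(PS)_{c_{\lambda}}$ sequence $w_{m}=(u_{m},v_{m})$, the relation $J_{\lambda}(w_{m})-\tfrac{1}{\mu}J_{\lambda}'(w_{m})w_{m}=c_{\lambda}+o(1)+o(\|w_{m}\|)$, combined with $(\mathcal{K}_{1})$ (which gives $\tfrac{1}{Q}\mathcal{K}_{i}(t)-\tfrac{1}{\mu}K_{i}(t)t\ge(\tfrac{1}{Q}-\tfrac{\sigma}{\mu})\mathcal{K}_{i}(t)$, where $\tfrac{1}{Q}-\tfrac{\sigma}{\mu}>0$ since $\mu>\sigma Q$), with $(\mathcal{K}_{2})$, and with $(F_{2})$ (so that $\int_{\Omega}F-\tfrac{1}{\mu}\int_{\Omega}\nabla F\cdot w\le 0$), forces $c'\|w_{m}\|^{\sigma Q}\le c_{\lambda}+o(1)$; thus $\{w_{m}\}$ is bounded and $\limsup_{m}\|\nabla_{\mathbb{H}^{n}}u_{m}\|_{L^{Q}(\Omega)}^{Q/(Q-1)}\le(c_{\lambda}/c')^{1/((Q-1)\sigma)}$, and similarly for $v_{m}$. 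Taking $M_{*}:=c'(\alpha_{Q}/\alpha_{0})^{(Q-1)\sigma}$, the condition $c_{\lambda}<M_{*}$ keeps $\alpha_{0}\|\nabla_{\mathbb{H}^{n}}u_{m}\|_{L^{Q}(\Omega)}^{Q/(Q-1)}$ and $\alpha_{0}\|\nabla_{\mathbb{H}^{n}}v_{m}\|_{L^{Q}(\Omega)}^{Q/(Q-1)}$ eventually below $\alpha_{Q}$, so by \eqref{e1.2} the integrals $\int_{\Omega}e^{q\alpha_{0}|u_{m}|^{Q/(Q-1)}}d\xi$ and $\int_{\Omega}e^{q\alpha_{0}|v_{m}|^{Q/(Q-1)}}d\xi$ are uniformly bounded for some $q>1$. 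Passing to a subsequence, $w_{m}\rightharpoonup w$ in $\mathbb{W}$, $w_{m}\to w$ in every $L^{p}(\Omega)\times L^{p}(\Omega)$ and a.e.; then the uniform exponential integrability together with \eqref{e1.3} gives $f_{i}(\xi,w_{m})\to f_{i}(\xi,w)$ in $L^{1}(\Omega)$ and $\int_{\Omega}\nabla F(\xi,w_{m})\cdot w_{m}\,d\xi\to\int_{\Omega}\nabla F(\xi,w)\cdot w\,d\xi$. Testing $J_{\lambda}'(w_{m})\to 0$ with $(u_{m}-u,0)$ and $(0,v_{m}-v)$, and distinguishing whether the Kirchhoff coefficients stay bounded away from $0$ (if one of them tends to $0$, the corresponding component converges strongly to $0$ by $(\mathcal{K}_{2})$), the $(S_{+})$-property of $\Delta_{Q}$ yields strong convergence of the horizontal gradients, and continuity of $K_{i}$ then permits passing to the limit, so $J_{\lambda}'(w)=0$. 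Finally, if $w=0$, then $J_{\lambda}'(w_{m})w_{m}=o(1)$ and $(\mathcal{K}_{2})$ force $\|w_{m}\|\to 0$, hence $J_{\lambda}(w_{m})\to 0$, contradicting $c_{\lambda}\ge\alpha>0$; therefore $w\ne 0$, and $w$ is a nontrivial solution of \eqref{e1.1}.

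\medskip\noindent\textit{Main difficulty.} The delicate point is the compactness analysis: the critical exponential growth destroys the $(PS)$ condition in general, and the only lever for restoring it is the strict inequality $c_{\lambda}<M_{*}$, which confines the $(PS)$ sequence to the range in which \eqref{e1.2} supplies uniform exponential integrability --- this is exactly where the largeness of $\eta$ is used. The degenerate, quasilinear nature of the Kirchhoff operator adds the further difficulties of having to exclude a vanishing Kirchhoff coefficient and of proving almost everywhere convergence of the horizontal gradients.
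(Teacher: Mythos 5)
Your proposal is correct in outline and follows the same architecture as the paper: the mountain pass theorem for the energy functional, a Palais--Smale condition valid only below an explicit threshold dictated by the Cohn--Lu constant $\alpha_Q$ and the bound $\limsup_n\|w_n\|^{\sigma Q}\lesssim c_\lambda$, and a level estimate showing that $c_\lambda$ drops below that threshold once $\eta$ is large (this is exactly the paper's Lemmas \ref{lem2.2}, \ref{lem2.3} and \ref{lem3.1}). The main genuine difference is in how strong convergence of the $(PS)$ sequence is concluded: you test $I_\lambda'(w_m)$ against $(u_m-u,0)$, $(0,v_m-v)$ and invoke the $(S_+)$ property of $\Delta_Q$, separating out the case where a Kirchhoff coefficient degenerates; the paper instead exploits convexity of $(u,v)\mapsto\frac1Q\mathcal{K}_1(\|u\|^Q)+\frac1Q\mathcal{K}_2(\|v\|^Q)$ together with weak lower semicontinuity to force $\mathcal{K}_i(\|u_n\|^Q)\to\mathcal{K}_i(\|u_0\|^Q)$ and hence norm convergence. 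Your route has the advantage of not needing convexity of $\mathcal{K}_i$ (which the paper uses but which is not implied by $(\mathcal{K}_1)$--$(\mathcal{K}_2)$), at the price of the extra case analysis for the degenerate coefficient, which you do address correctly via $(\mathcal{K}_2)$. One small imprecision to repair: in the level estimate you bound $\mathcal{K}_i(t^Q\|\phi\|^Q)\le Ct^{\sigma Q}$ for all $t\ge0$, but $(\mathcal{K}_1)$ only gives $\mathcal{K}_i(s)\le\mathcal{K}_i(T)T^{-\sigma}s^\sigma$ for $s\ge T$ (the ratio $\mathcal{K}_i(s)/s^\sigma$ is nonincreasing, so the inequality can reverse near $s=0$). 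This does not damage the conclusion --- for $t$ small the Kirchhoff terms are $o(1)$ by continuity of $\mathcal{K}_i$ at $0$, and for $t$ bounded away from $0$ your calculus estimate applies, so $\max_{t\ge0}I_\lambda(t\phi,t\phi)\to0$ as $\eta\to+\infty$ in any case --- but the two regimes should be treated separately (the paper sidesteps this by using convexity to get a $t^Q$ bound on $[0,1]$). With that adjustment your argument is complete.
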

In conclusion, we describe the structure of the paper.
In Section \ref{sec2} we collect all  necessary preliminaries.
In Section \ref{sec3} we study the mountain pass geometry.
In Section \ref{sec4} we verify the compactness condition.
Finally, in Section \ref{sec5} we present the proof of our main result.

\section{Preliminaries}\label{sec2}

We begin by recalling some key facts about the Heisenberg group $\mathbb{H}^{n}$, i.e. a Lie group of
 topological dimension $2 n+1$ and the background manifold
$\mathbb{R}^{2 n+2}$. We define
 $$ \xi \circ
\xi^{\prime}=\tau_{\xi}\left(\xi^{\prime}\right)=\left(x+x^{\prime},
y+y^{\prime}, t+t^{\prime}+2\left(x^{\prime} y-y^{\prime}
x\right)\right), \ \hbox{ for all} \
 \ \xi=(x,y,z), \xi^{\prime}=(x',y',z') \in \mathbb{H}^{n}.$$
Furthermore,
$$\delta_{s}(\xi)=\left(s x, s y, s^{2} t\right),
\
\hbox{ where}
\
s>0,$$
gives a natural
group of dilations on $\mathbb{H}^{n}$, so
  $$\delta_{s}\left(\xi_{0} \circ \xi\right)=\delta_{s}\left(\xi_{0}\right) \circ \delta_{s}(\xi).$$
The Jacobian determinant of dilatations $\delta_{s}:
\mathbb{H}^{n} \rightarrow \mathbb{H}^{n}$ is constant for all
$\xi=(x, y, t) \in \mathbb{H}^{n}$ and $\delta_{s}=\mathbb{R}^{2
n+2}$.
Next, $$
 B_{R}\left(\xi_{0}\right)=\left\{\xi \in \mathbb{H}^{n}:
d_{K}\left(\xi, \xi_{0}\right)<R\right\} $$ 
denotes
 the Kor\'{a}nyi open
ball with radius $R,$ centered at $\xi_{0}$ (see Leonardi and Masnou \cite{le}).

The
horizontal gradient $\nabla_{H}=(X, Y)$ and $$
X=\frac{\partial}{\partial x}+2 y \frac{\partial}{\partial t},
Y=\frac{\partial}{\partial y}-2 x \frac{\partial}{\partial t}$$ give
rise to the Lie algebra of left-invariant vector fields on
$\mathbb{H}^{n}$. Finally, $\Delta_{H}
u=\operatorname{div}_{H}\left(\nabla_{H} u\right)$ represents the
Kohn-Laplacian $\Delta_{H}$. In addition,
 the
 degenerate elliptic operator $\Delta_{H}$ satisfies Bony's
maximum principle (see Bony \cite{bo2}).

Next, we define the classical Sobolev space $W_{0}^{1, Q}(\Omega)$ as the closure of $\mathcal{C}_{0}^{\infty}(\Omega)$ with
 respect  to the norm
$$\|u\|:=\left\|\nabla_{H^{n}}
u\right\|_{W_{0}^{1, Q}}. $$
Let $$W_{0}^{1, Q}\left(\Omega, \mathbb{R}^{2}\right):=W_{0}^{1, Q}(\Omega) \times
W_{0}^{1, Q}(\Omega),$$   endowed by the norm $$ \|(u,
v)\|_{W_{0}^{1, Q}\left(\Omega, \mathbb{R}^{2}\right)}=(\|u\|_{W_{0}^{1, Q}(\Omega)}^{Q}+\|v\|_{W_{0}^{1,
Q}(\Omega)}^{Q})^{1/Q}.$$ System \eqref{e1.1} is variational and the
corresponding energy functional $I_{\lambda}: W_{0}^{1, Q}\left(\Omega, \mathbb{R}^{2}\right) \rightarrow
\mathbb{R}$ is given by
$$
I_{\lambda}(u,v):=\frac{1}{Q}\mathcal{K}_{1}\left( \int_{\Omega}\left|\nabla_{\mathbb{H}^{n}} u\right|^{Q} d \xi\right)+\frac{1}{Q} \mathcal{K}_{2} \left(\int_{\Omega}\left|\nabla_{\mathbb{H}^{n}} v\right|^{Q} d \xi\right)-\lambda \int_{\Omega} F(\xi, u,v) d \xi.
$$
By $\left(F_{1}\right),\left(F_{2}\right)$ and \eqref{e1.3},  there exists for
$\varepsilon>0,r>Q$,  a constant $C=C(\varepsilon,r)>0$
such that
\begin{equation}\label{e2.1}
|f_{1}(\xi, w)|+|f_{2} (\xi, w)| \leq
\varepsilon|w|^{Q-1}+C|w|^{r-1} e ^{\alpha|w|^{Q^{\prime}}}, \quad
\text { for all } \alpha>\alpha_{0}.
\end{equation}
Consequently, we have
\begin{equation}\label{e2.2}
F(\xi, w)  \leq \frac{\varepsilon}{\theta Q}|w|^{Q}+C|w|^{r} e
^{\alpha|w|^{Q^{\prime}}}, \quad \text { for all } \alpha>\alpha_{0},
\end{equation}
where $Q^{\prime}=\frac{Q}{Q-1}$.
Since
\begin{equation}\label{e2.3}
(a+b)^{m} \leq 2^{m-1}\left(a^{m}+b^{m}\right), \ \text { for all }
a, b \geq 0, \ m>0,
\end{equation}
we can obtain the following inequality
$$|w|^{Q^{\prime}}=\left(|u|^{2}+|v|^{2}\right)^{\frac{Q^{\prime}}{2}}
\leq 2^{Q^{\prime}-1}\left(|u|^{Q^{\prime}}+|v|^{Q^{\prime}}\right).
$$ Thus, invoking the H\"{o}lder inequality and \eqref{e1.2}, we get
$$
\begin{aligned}\int_{\Omega} e ^{\alpha|w|^{Q^{\prime}}} d \xi & \leq \int_{\Omega} e^{\left(\alpha 2^{Q^{\prime}-1}|u|^{Q^{\prime}}\right)} e ^{\left(\alpha 2^{Q^{\prime}-1}|v|^{Q^{\prime}}\right)} d \xi\\
&\leq\left(\int_{\Omega}e ^{\alpha 2^{Q^{\prime}}|u|^{Q^{\prime}}}d \xi\right)^{1 / 2}\left(\int_{\Omega}e ^{\alpha 2^{Q^{\prime}}|v|^{Q^{\prime}} }d \xi\right)^{1 / 2}<\infty .
\end{aligned}
$$
Moreover, we can conclude that   $I_{\lambda}\in C^{1}( W_{0}^{1, Q}\left(\Omega, \mathbb{R}^{2}\right), \mathbb{R})$ is well-defined and that the derivative of $I_{\lambda}$ is
\begin{eqnarray*}
\left\langle I_{\lambda}^{\prime}(u, v),(\varphi,
\psi)\right\rangle&=& K_{1}(\|u\|^{Q})
\int_{\Omega}\left|\nabla_{\mathbb{H}^{n}}
u\right|_{\mathbb{H}^{n}}^{Q-2} \nabla_{\mathbb{H}^{n}} u
\nabla_{\mathbb{H}^{n}} \varphi d\xi+
K_{2}\left(\|v\|^{Q}\right) \int_{\Omega}\left|\nabla_{\mathbb{H}^{n}} v\right|_{\mathbb{H}^{n}}^{Q-2}  \nabla_{\mathbb{H}^{n}} v \nabla_{\mathbb{H}^{n}} \psi d \xi \\
&&\ -\lambda \int_{\Omega} f_{1}(\xi, u, v)\varphi d\xi-\lambda
\int_{\Omega}f_{2}(\xi, u, v)\psi d \xi,\  \hbox{for all} \
 (u,
v),(\varphi, \psi) \in W_{0}^{1, Q}\left(\Omega, \mathbb{R}^{2}\right).
\end{eqnarray*}
Therefore the solutions of system \eqref{e1.1} coincide with the
critical points of $I_{\lambda}$.
\begin{lemma}\label{lem2.1}
Suppose that condition $\left(F_{1}\right)$ is satisfied and
that
$$t \geqslant0,
\ \
\alpha>\alpha_{0},
\ \
\|w\|\leq\rho,
\
\hbox{ with}
\quad
 \alpha
\rho^{Q^{\prime}}<\frac{\alpha_{Q}}{2Q^{\prime}}.$$
Then there
exists $C=C(t, \alpha,\rho)>0$ such that $$ \int_{\Omega}e^{\alpha
|w|^{Q^{\prime}}}|w|^{t} d \xi\leqslant C\|w\|^{t}.$$
\end{lemma}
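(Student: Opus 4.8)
The plan is to estimate $\int_{\Omega} e^{\alpha |w|^{Q'}} |w|^{t}\, d\xi$ by splitting off the exponential factor via Hölder's inequality and then invoking the Cohn--Lu Trudinger--Moser inequality \eqref{e1.2} to control the exponential part, while the Sobolev embedding handles the polynomial part. First I would fix a Hölder pair $p, p' > 1$ with $\frac1p + \frac1{p'} = 1$, writing
$$
\int_{\Omega} e^{\alpha |w|^{Q'}} |w|^{t}\, d\xi \leq \left( \int_{\Omega} e^{p\alpha |w|^{Q'}}\, d\xi \right)^{1/p} \left( \int_{\Omega} |w|^{p' t}\, d\xi \right)^{1/p'}.
$$
The second factor is handled by the continuous embedding $W_{0}^{1,Q}(\Omega,\mathbb{R}^2) \hookrightarrow L^{p't}(\Omega,\mathbb{R}^2)$ (valid since $\Omega$ is bounded), giving $\bigl(\int_\Omega |w|^{p't}\bigr)^{1/p'} \le C_1 \|w\|^{t}$; note condition $(F_1)$ is what legitimizes working with the quantity $|w|^t$ near the origin, though for the integral bound the embedding alone suffices.

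For the first factor, the key is to bound the exponent. Using the inequality $|w|^{Q'} = (|u|^2+|v|^2)^{Q'/2} \le 2^{Q'-1}(|u|^{Q'}+|v|^{Q'})$ established in the preliminaries, and then Hölder again to separate the $u$- and $v$-contributions as already done in the excerpt, one reduces to bounding $\int_\Omega e^{\beta |u|^{Q'}}\,d\xi$ and $\int_\Omega e^{\beta |v|^{Q'}}\,d\xi$ with $\beta = p\alpha 2^{Q'}$ (up to the exact combinatorial constant). To apply \eqref{e1.2} I would normalize: since $\|u\| \le \|w\| \le \rho$, write
$$
\beta |u|^{Q'} = \beta \|u\|^{Q'} \left( \frac{|u|}{\|u\|} \right)^{Q'} \le \beta \rho^{Q'} \left( \frac{|u|}{\|u\|} \right)^{Q'},
$$
and the hypothesis $\alpha \rho^{Q'} < \frac{\alpha_Q}{2Q'}$ is precisely designed so that, after absorbing the factors $2^{Q'}$ and choosing $p$ sufficiently close to $1$, we get $\beta \rho^{Q'} \le \alpha_Q$; then \eqref{e1.2} applied to $u/\|u\| \in W_0^{1,Q}(\Omega)$ of unit norm yields $\int_\Omega e^{\beta|u|^{Q'}}\,d\xi \le C_0 |\Omega| < \infty$, a bound independent of $u$. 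The same applies to $v$, so the first factor is bounded by a constant depending only on $t, \alpha, \rho$ (through the admissible choice of $p$).

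The main obstacle — really the only delicate point — is the bookkeeping of constants: one must verify that the strict inequality $\alpha \rho^{Q'} < \frac{\alpha_Q}{2Q'}$ leaves enough room to accommodate simultaneously the factor $2^{Q'-1}$ from the elementary inequality on $|w|^{Q'}$, the extra factor $2$ from the Hölder splitting of the $e^{\beta|u|^{Q'}}e^{\beta|v|^{Q'}}$ product (which explains the $2Q'$ rather than $Q'$ in the denominator of the hypothesis), and a Hölder exponent $p > 1$ that can be taken arbitrarily close to $1$. Since all these are fixed finite factors and $p \downarrow 1$ is available, the strict inequality guarantees the product of the exponential constant stays $\le \alpha_Q$; combining the two factor estimates gives the claimed bound $\int_{\Omega} e^{\alpha |w|^{Q'}} |w|^{t}\, d\xi \le C \|w\|^{t}$ with $C = C(t,\alpha,\rho)$.
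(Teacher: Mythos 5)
Your proposal is correct and follows essentially the same route as the paper: H\"older's inequality to separate the exponential factor from $|w|^t$, the embedding of $W_0^{1,Q}(\Omega)$ into Lebesgue spaces for the polynomial factor, and the elementary inequality $|w|^{Q'}\le 2^{Q'-1}(|u|^{Q'}+|v|^{Q'})$ followed by Cauchy--Schwarz and the Cohn--Lu inequality \eqref{e1.2} (after normalizing, with the smallness hypothesis on $\alpha\rho^{Q'}$ absorbing the factor $2^{Q'}$ and a H\"older exponent close to $1$) for the exponential factor. The only cosmetic difference is that you normalize $u$ by $\|u\|$ where the paper uses $\|w\|$; both are admissible since the Trudinger--Moser bound only requires the normalized function to have norm at most $1$.
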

\begin{proof}
Due to the H\"{o}lder inequality, we get
$$ \int_{\Omega}e
^{\alpha|w|^{Q^{\prime}}}|w|^{t} d \xi \leq\left(\int_{\Omega} e ^{q
\alpha|w|^{Q^{\prime}}} d \xi\right)^{\frac{1}{q}}\|w\|_{p t}^{t},
\
\hbox{where}
\
p,q>1,\frac{1}{p}+\frac{1}{q}=1,
\
 pt\geq Q.$$
Since
$$\|w\| \leqslant \rho
\
\hbox{ with}
\
 \alpha
\rho^{Q^{\prime}}<\frac{\alpha_{Q}}{2^{Q^{\prime}}},$$
there exists
$q>1$
 such that
$$
2^{^{Q^{\prime}}} q \alpha \rho^{Q^{\prime}} \leq
\alpha_{Q}.
$$
 By virtue of \eqref{e2.3}, one has
$$
\begin{aligned}
\int_{\Omega} e^{q\alpha|w|^{Q^{\prime}}} d \xi & \leq \int_{\Omega} e ^{\left(2^{Q^{\prime}-1} q \alpha\left(|u|^{Q^{\prime}}+|v|^{Q^{\prime}}\right)\right)} d \xi
%\\&
 \leq\left(\int_{\Omega} e ^{\left(2^{Q^{\prime}} q \alpha|u|^{Q^{\prime}}\right)} d \xi\right)^{\frac{1}{2}}\left(\int_{\Omega} e ^{\left(2^{Q^{\prime}} q \alpha|v|^{Q^{\prime}}\right)}d \xi\right)^{\frac{1}{2}} \\
& \leq\left(\int_{\Omega} e ^{\left(2^{Q^{\prime}} q \alpha \rho^{Q^{\prime}}\left(\frac{|u|}{\|w\|}\right)^{Q^{\prime}}\right)} d \xi\right)^{\frac{1}{2}}\left(\int_{\Omega} e ^{2^{Q^{\prime}} q \alpha \rho^{Q^{\prime}}\left(\frac{|v|}{\|w\|}\right)^{Q^{\prime}}}d \xi\right)^{\frac{1}{2}}
\leq C,
\end{aligned}
$$
which implies that the conclusion of Lemma \ref{lem2.1} is valid.\end{proof}

For other background information we refer the reader to the comprehensive monograph by Papageorgiou et al. \cite{PRR}.

\section{Mountain Pass Geometry}\label{sec3}
In this section  we shall prove that  $I_{\lambda}$ satisfies the
mountain pass geometry.
\begin{lemma}\label{lem2.2} 
 Suppose that conditions $(\mathcal{K})$, $(\mathcal{F})$ are satisfied  and
 that
  $f_{i}$ have
 exponential critical growth. Then the following properties hold:

$\left(I_{1}\right)$ There exist $\iota>0$ and $\kappa>0$ such that $I_{\lambda}(u, v) \geqslant \iota,$ for all  $\|(u, v)\|=\kappa$.

$\left(I_{2}\right)$ There exists $(e,e) \in  W_{0}^{1, Q}\left(\Omega, \mathbb{R}^{2}\right)$ with $\|(e,
e)\|>\kappa$ such that $I_{\lambda}(e, e)<0$.
\end{lemma}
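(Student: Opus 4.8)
The plan is to verify the two geometric conditions separately, exploiting the growth estimates \eqref{e2.1}–\eqref{e2.2} together with the lower bounds on $\mathcal{K}_i$ forced by $(\mathcal{K}_2)$, and the superquadratic lower bound on $F$ recorded in Remark \ref{rmk1.1}.

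For $(I_1)$, I would first use $(\mathcal{K}_2)$ to integrate and obtain $\mathcal{K}_i(t)\geq \tfrac{k_i}{\sigma}t^{\sigma}$, so that
$$
I_\lambda(u,v)\geq \frac{k_1}{\sigma Q}\|u\|^{\sigma Q}+\frac{k_2}{\sigma Q}\|v\|^{\sigma Q}-\lambda\int_\Omega F(\xi,u,v)\,d\xi.
$$
Since $\sigma\geq 1$, on a small sphere $\|(u,v)\|=\kappa$ the leading term dominates a power $\kappa^{\sigma Q}\geq\kappa^{Q}$ (one may shrink $\kappa<1$). For the nonlinear part, apply \eqref{e2.2} to bound $F$ by $\tfrac{\varepsilon}{\theta Q}|w|^Q+C|w|^r e^{\alpha|w|^{Q'}}$; the first piece contributes $O(\varepsilon\|(u,v)\|^Q)$ via the continuous embedding $W_0^{1,Q}\hookrightarrow L^Q$, and the second piece is controlled by Lemma \ref{lem2.1}: choosing $\kappa$ so small that $\alpha\kappa^{Q'}<\alpha_Q/(2^{Q'}Q')$ (equivalently $<\alpha_Q/(2Q')$ as stated there), Lemma \ref{lem2.1} with $t=r$ gives $\int_\Omega e^{\alpha|w|^{Q'}}|w|^r\,d\xi\leq C\|(u,v)\|^r$ with $r>Q$. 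Collecting terms,
$$
I_\lambda(u,v)\geq \Big(\tfrac{\min\{k_1,k_2\}}{2^{?}\sigma Q}-\tfrac{\lambda\varepsilon C'}{\theta Q}\Big)\kappa^{Q}-\lambda C''\kappa^{r},
$$
and since $r>Q$, after first fixing $\varepsilon$ small (depending on $\lambda$) and then $\kappa$ small the right-hand side is a positive constant $\iota>0$. A minor technical point is to handle the norm on the product space: $\|u\|^{\sigma Q}+\|v\|^{\sigma Q}\gtrsim (\|u\|^Q+\|v\|^Q)^{\sigma}=\|(u,v)\|^{\sigma Q}$ by convexity/concavity of $t\mapsto t^\sigma$ with the constant $2^{1-\sigma}$, which is harmless.

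For $(I_2)$, I would fix a nonzero $\phi\in W_0^{1,Q}(\Omega)$, $\phi\geq 0$, test with $(u,v)=(s\phi,s\phi)$ for $s>0$, and use the Kirchhoff upper bound coming from $(\mathcal{K}_1)$: integrating $\sigma\mathcal{K}_i(t)\geq K_i(t)t=t\,\mathcal{K}_i'(t)$ shows $t\mapsto \mathcal{K}_i(t)/t^{\sigma}$ is nonincreasing, hence $\mathcal{K}_i(t)\leq \mathcal{K}_i(1)t^{\sigma}$ for $t\geq 1$, i.e. $\mathcal{K}_i\big(s^Q\|\phi\|^Q\big)\leq C s^{\sigma Q}$ for large $s$. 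By Remark \ref{rmk1.1}, $F(\xi,s\phi,s\phi)\geq \eta\, s^\mu |(\phi,\phi)|^\mu=\eta\, 2^{\mu/2}s^\mu\phi^\mu$, so
$$
I_\lambda(s\phi,s\phi)\leq \frac{2C}{Q}s^{\sigma Q}\|\phi\|^{\sigma Q}-\lambda\eta\,2^{\mu/2}s^\mu\int_\Omega \phi^\mu\,d\xi.
$$
Since $\mu>\sigma Q$ by $(F_2)$, the negative term dominates as $s\to+\infty$, so $I_\lambda(s\phi,s\phi)\to-\infty$; choosing $s$ large enough we get $\|(s\phi,s\phi)\|>\kappa$ and $I_\lambda(s\phi,s\phi)<0$, and we set $(e,e)=(s\phi,s\phi)$. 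The main obstacle in the whole lemma is purely bookkeeping in $(I_1)$: one must order the choices correctly (pick $\alpha>\alpha_0$ close to $\alpha_0$, then $\varepsilon$ small relative to $\lambda$ and $\min\{k_i\}$, then $\kappa$ small enough to simultaneously satisfy the smallness needed for Lemma \ref{lem2.1} and to make the $\kappa^r$ term subordinate), since $\lambda$ and $\eta$ are external parameters and the estimate must hold for every $\lambda>0$; $(I_2)$ is then immediate from $\mu>\sigma Q$.
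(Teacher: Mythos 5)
Your proposal follows essentially the same route as the paper's: for $(I_1)$ you integrate $(\mathcal{K}_2)$ to get $\mathcal{K}_i(t)\ge \frac{k_i}{\sigma}t^{\sigma}$, split $F$ via \eqref{e2.2}, and control the exponential part with Lemma \ref{lem2.1} on a small sphere; for $(I_2)$ you use the $t^{\sigma}$ upper bound on $\mathcal{K}_i$ coming from $(\mathcal{K}_1)$ together with $F\ge\eta|w|^{\mu}$ and $\mu>\sigma Q$, exactly as in the paper (your derivation that $t\mapsto\mathcal{K}_i(t)/t^{\sigma}$ is nonincreasing is a slightly more explicit justification of the paper's bound $\mathcal{K}_i(t)\le c_it^{\sigma}+d_i$).

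One point to correct in $(I_1)$: for $\kappa<1$ and $\sigma>1$ the inequality is reversed, $\kappa^{\sigma Q}\le\kappa^{Q}$, so the Kirchhoff term contributes the \emph{smaller} power $\kappa^{\sigma Q}$ and cannot uniformly absorb a term of order $\varepsilon\kappa^{Q}$ as $\kappa\to0$; your displayed estimate with leading power $\kappa^{Q}$ is therefore only justified when $\sigma=1$. The paper's own proof makes the same silent identification by writing $(\min\{k_1,k_2\}-\lambda\varepsilon)\|(u,v)\|^{\sigma Q}$, so your argument is no worse than the original, but in the genuinely degenerate case $\sigma>1$ the treatment of the $\varepsilon|w|^{Q}$ piece needs additional care (the standard remedy is to require $f_i(\xi,w)=o(|w|^{\sigma Q-1})$ as $w\to0$, so that the subcritical piece of $F$ is $O(\varepsilon|w|^{\sigma Q})$ and is dominated by $\kappa^{\sigma Q}$). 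Aside from this shared gap, your ordering of the choices of $\alpha$, $\varepsilon$, $\kappa$ and the convexity constant $2^{1-\sigma}$ for the product norm are fine.
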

\begin{proof}First, we prove assertion $(I_1).$ If $r>\sigma Q$, $\alpha>\alpha_{0},$ and $0<\varepsilon<\min \left\{k_{1}, k_{2}\right\},$ then by virtue of $(F_{1})$ and \eqref{e2.2}, we have

$$I_{\lambda}(u, v) \geqslant \frac{1}{\sigma Q}\left(\min
\left\{k_{1}, k_{2}\right\}-\lambda \varepsilon\right)\|(u,
v)\|^{\sigma Q}-C
_{1}\lambda\int_{\Omega}e^{\alpha|w|^{Q^{\prime}}}|(u, v)|^{r} d\xi,
\ \hbox{ for all} \ (u, v) \in   W_{0}^{1, Q}\left(\Omega,
\mathbb{R}^{2}\right).$$

 By  Lemma \ref{lem2.1}  and the following inequality
$$\|(u, v)\|=\kappa< \left(\frac{\alpha_{Q}}{2\alpha
Q^{\prime}}\right)^{\frac{1}{Q^{\prime}}},$$  we obtain $$I_{\lambda}(u, v)
\geqslant \frac{1}{\sigma Q}\left(\min \left\{k_{1},
k_{2}\right\}-\lambda \varepsilon\right) \kappa^{\sigma Q}-C
_{2}\kappa^{r}.$$ Next, we choose $$\kappa<
\left(\frac{\alpha_{Q}}{2\alpha
Q^{\prime}}\right)^{\frac{1}{Q^{\prime}}}$$ so small that
 $$\frac{1}{\sigma Q}\left(\min \left\{k_{1},
k_{2}\right\}-\lambda \varepsilon\right) -C_{2} \kappa^{r-\sigma
Q}>0.$$ This implies that $$I_{\lambda}(u, v) \geqslant
\iota:=\kappa^{\sigma Q}\left[\frac{1}{\sigma Q}\left(\min
\left\{k_{1}, k_{2}\right\}-\lambda \varepsilon\right)-C_{2}
\kappa^{r-\sigma Q}\right], \ \hbox{ for all} \ \|(u, v)\|=\kappa.$$

Next, we prove assertion $\left(I_{2}\right)$. Let $\psi\in C_{0
}^{\infty}(B_{R}(\xi_{0}))$ be such that $\psi \geqslant 0$ on
$B_{R}(\xi_{0})$ and  let $K=\operatorname{supp}(\psi)$.
Invoking  condition $(\mathcal{K})$, we  obtain
 $$\mathcal{K}(t) \leq c_{i} t^{\sigma}+d_{i},\ \text { for all } \ t
\geqslant 0, \ \hbox{where} \ c_{i}, d_{i}>0, i= 1, 2.$$ By virtue
of Remark \ref{rmk1.1}, we get
$$
I_{\lambda}(t \psi, t \psi) \leqslant \frac{t^{\sigma Q}}{Q}\|\psi\|^{\sigma Q}\left(c_{1}+d_{1}\right)+\frac{1}{Q}\left(c_{2}+d_{2}\right)-\lambda \eta t^{\mu} \int_{\Omega} \psi^{\mu}d\xi.
$$
Since $\mu>\sigma Q,$ we can conclude that $$I_{\lambda}(t \psi, t
\psi) \rightarrow-\infty, \ \hbox{ as} \ t \rightarrow+\infty.$$
Therefore, we  get the claim by using $e:=t \psi$ for a
sufficiently large $t>0$.
\end{proof}

\section{Compactness Condition}\label{sec4}
In this section, we shall prove the following compactness condition.
\begin{lemma}\label{lem2.3}
 Suppose that the following inequality holds
\begin{equation}\label{e2.4}
c<\frac{\nu_{0}(\mu-\sigma Q) \alpha_{Q}^{\sigma(Q-1)}}{ \alpha_{0}^{\sigma(Q-1)} Q\mu
\sigma^{\sigma Q+1}},
\
\hbox{where}
\
\nu_{0}:=\min \left\{k_{1},
k_{2}\right\}.
\end{equation}
Then there exists   a
$(\mathrm{PS})_{c}$ sequence for $I_{\lambda}$ $\left\{\left(u_{n}, v_{n}\right)\right\}$ such that the functional $I_{\lambda}$
satisfies the Palais-Smale condition at level $c$.
\end{lemma}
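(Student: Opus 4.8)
The plan is to take an arbitrary Palais--Smale sequence $\{w_{n}\}=\{(u_{n},v_{n})\}\subset W_{0}^{1,Q}(\Omega,\mathbb{R}^{2})$ for $I_{\lambda}$ at a level $c$ satisfying \eqref{e2.4}, i.e. $I_{\lambda}(w_{n})\to c$ and $I_{\lambda}'(w_{n})\to 0$ in the dual space, and to show it has a strongly convergent subsequence (which is exactly what the assertion ``$I_{\lambda}$ satisfies $(\mathrm{PS})_{c}$'' means). \textbf{Step 1 (boundedness).} I would estimate $I_{\lambda}(w_{n})-\frac{1}{\mu}\langle I_{\lambda}'(w_{n}),w_{n}\rangle$ from below. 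By $(\mathcal{K}_{1})$ one has $\frac{1}{Q}\mathcal{K}_{i}(t)-\frac{1}{\mu}K_{i}(t)t\ge\big(\frac{1}{Q}-\frac{\sigma}{\mu}\big)\mathcal{K}_{i}(t)$ and, integrating the lower bound in $(\mathcal{K}_{2})$, $\mathcal{K}_{i}(t)\ge\frac{k_{i}}{\sigma}t^{\sigma}$; since $\mu>\sigma Q$ the Kirchhoff part dominates $\frac{\nu_{0}(\mu-\sigma Q)}{\sigma Q\mu}(\|u_{n}\|^{\sigma Q}+\|v_{n}\|^{\sigma Q})$, while $(F_{2})$ makes $\frac{\lambda}{\mu}\int_{\Omega}\nabla F(\xi,w_{n})\cdot w_{n}-\lambda\int_{\Omega}F(\xi,w_{n})$ nonnegative. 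Using \eqref{e2.3} to compare $\|u_{n}\|^{\sigma Q}+\|v_{n}\|^{\sigma Q}$ with $(\|u_{n}\|^{Q}+\|v_{n}\|^{Q})^{\sigma}=\|w_{n}\|^{\sigma Q}$, one obtains $c+o(1)+o(\|w_{n}\|)\ge\frac{\nu_{0}(\mu-\sigma Q)}{2^{\sigma-1}\sigma Q\mu}\|w_{n}\|^{\sigma Q}$, and since $\sigma Q>1$ the sequence is bounded. Along a subsequence, $w_{n}\rightharpoonup w_{0}=(u_{0},v_{0})$ in $W_{0}^{1,Q}(\Omega,\mathbb{R}^{2})$, $w_{n}\to w_{0}$ in $L^{s}(\Omega)\times L^{s}(\Omega)$ for every $s\in[1,\infty)$ (compactness of the Sobolev embedding $W_{0}^{1,Q}(\Omega)\hookrightarrow L^{s}(\Omega)$ on the bounded set $\Omega$) and a.e. in $\Omega$.

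\textbf{Step 2 (the level restriction tames the exponential term).} Re-running the estimate of Step 1 with $o(1)$ in place of $o(\|w_{n}\|)$ gives $\frac{\nu_{0}(\mu-\sigma Q)}{\sigma Q\mu}\limsup_{n}(\|u_{n}\|^{\sigma Q}+\|v_{n}\|^{\sigma Q})\le c$, so \eqref{e2.4} forces $\limsup_{n}\|u_{n}\|^{\sigma Q}$ and $\limsup_{n}\|v_{n}\|^{\sigma Q}$ to lie strictly below $\alpha_{Q}^{\sigma(Q-1)}/(\alpha_{0}^{\sigma(Q-1)}\sigma^{\sigma Q})$; equivalently $\limsup_{n}\|u_{n}\|^{Q'}$ and $\limsup_{n}\|v_{n}\|^{Q'}$ (with $Q'=Q/(Q-1)$) stay below $\alpha_{Q}/(\alpha_{0}\sigma^{Q'})$, and hence, up to the dyadic constants in \eqref{e2.3}, $\|w_{n}\|^{Q'}$ remains below the threshold at which \eqref{e1.2} ceases to apply. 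Consequently, arguing as in the estimate preceding Lemma \ref{lem2.1} --- via $|w_{n}|^{Q'}\le|u_{n}|^{Q'}+|v_{n}|^{Q'}$ (valid because $Q'<2$), the H\"older inequality and \eqref{e1.2} --- one may fix $\alpha>\alpha_{0}$ and an exponent $q_{0}>1$, both sufficiently close to $\alpha_{0}$ and to $1$, so that $\sup_{n}\int_{\Omega}e^{q_{0}\alpha|w_{n}|^{Q'}}\,d\xi<\infty$. Together with \eqref{e2.1}--\eqref{e2.2} and the a.e. convergence, a Vitali-type argument then yields $\int_{\Omega}F(\xi,w_{n})\to\int_{\Omega}F(\xi,w_{0})$, $\int_{\Omega}f_{i}(\xi,w_{n})\varphi\to\int_{\Omega}f_{i}(\xi,w_{0})\varphi$ for every test pair $(\varphi,\psi)$, and (via H\"older together with $u_{n}-u_{0},v_{n}-v_{0}\to0$ in $L^{s}$) $\int_{\Omega}f_{i}(\xi,w_{n})(u_{n}-u_{0})\to0$ and $\int_{\Omega}f_{i}(\xi,w_{n})(v_{n}-v_{0})\to0$.

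\textbf{Step 3 (strong convergence).} Passing to a further subsequence with $\|u_{n}\|^{Q}\to\beta_{1}\ge0$ and $\|v_{n}\|^{Q}\to\beta_{2}\ge0$, I would insert $(\varphi,\psi)=(u_{n}-u_{0},v_{n}-v_{0})$ into $\langle I_{\lambda}'(w_{n}),(\varphi,\psi)\rangle=o(1)$ and discard the $f_{i}$-terms by Step 2, obtaining
$$K_{1}(\|u_{n}\|^{Q})\langle-\Delta_{Q}u_{n},u_{n}-u_{0}\rangle+K_{2}(\|v_{n}\|^{Q})\langle-\Delta_{Q}v_{n},v_{n}-v_{0}\rangle=o(1).$$
Since $\langle-\Delta_{Q}u_{0},u_{n}-u_{0}\rangle\to0$ by weak convergence and $-\Delta_{Q}$ is monotone, $\liminf_{n}\langle-\Delta_{Q}u_{n},u_{n}-u_{0}\rangle\ge0$, and likewise for $v_{n}$. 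If $\beta_{1}>0$, then by $(\mathcal{K}_{2})$ and continuity $K_{1}(\|u_{n}\|^{Q})\to K_{1}(\beta_{1})\ge k_{1}\beta_{1}^{\sigma-1}>0$, which together with the display forces $\langle-\Delta_{Q}u_{n},u_{n}-u_{0}\rangle\to0$, whence $u_{n}\to u_{0}$ in $W_{0}^{1,Q}(\Omega)$ by the $(S_{+})$ property of $-\Delta_{Q}$; if $\beta_{1}=0$, then $\|u_{n}\|\to0$, so $u_{0}=0$, $u_{n}\to u_{0}$ trivially, and, using $K_{1}(t)t\le\sigma\mathcal{K}_{1}(t)\to0$ as $t\to0$, the first summand in the display is itself $o(1)$, so the same dichotomy applied to $v_{n}$ gives $v_{n}\to v_{0}$; the remaining cases are symmetric. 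In all cases $w_{n}\to w_{0}$ strongly; passing to the limit in $\langle I_{\lambda}'(w_{n}),(\varphi,\psi)\rangle$ (every term now converges) shows that $w_{0}$ is a critical point of $I_{\lambda}$ with $I_{\lambda}(w_{0})=c$, so $I_{\lambda}$ satisfies $(\mathrm{PS})_{c}$.

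The step I expect to be the main obstacle is Step 2: the quantitative matching between the admissible energy level in \eqref{e2.4} --- whose exponents $\sigma(Q-1)$ and $\sigma Q+1$ encode precisely this --- and the sub-threshold regime of \eqref{e1.2} under the splitting forced by the two components and by the constants from \eqref{e2.3}, because it is here that critical exponential growth genuinely interacts with the degeneracy of the Kirchhoff terms. The remainder is the standard monotonicity and $(S_{+})$ machinery for the $Q$-Laplacian, adapted to the nonlocal coefficients through the dichotomy $\beta_{i}=0$ versus $\beta_{i}>0$.
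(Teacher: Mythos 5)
Your proposal is correct in outline and, for the two substantive halves of the argument, coincides with the paper's proof: Step 1 is exactly the paper's estimate of $I_{\lambda}(w_{n})-\frac{1}{\mu}\langle I_{\lambda}'(w_{n}),w_{n}\rangle$ via $(\mathcal{K}_{1})$, $(\mathcal{K}_{2})$ and $(F_{2})$, giving $\limsup_{n}\|w_{n}\|^{\sigma Q}\leq\frac{\sigma Q\mu}{(\mu-\sigma Q)\nu_{0}}c$, and Step 2 is the paper's use of \eqref{e2.4} to place $\alpha_{0}\|w_{n}\|^{Q'}$ strictly below $\alpha_{Q}/\sigma^{Q'}$ so that \eqref{e1.2} yields a uniform bound on $\int_{\Omega}e^{q_{0}\alpha|w_{n}|^{Q'}}d\xi$ and hence \eqref{e2.9}. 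Where you genuinely diverge is Step 3. The paper introduces $\Phi(u,v)=\frac{1}{Q}\mathcal{K}_{1}(\|u\|^{Q})+\frac{1}{Q}\mathcal{K}_{2}(\|v\|^{Q})$, plays weak lower semicontinuity against the convexity inequality $\Phi(w_{0})-\Phi(w_{n})\geq\Phi'(w_{n})(w_{0}-w_{n})$ to force $\mathcal{K}_{i}(\|u_{n}\|^{Q})\to\mathcal{K}_{i}(\|u_{0}\|^{Q})$, deduces norm convergence from strict monotonicity of $\mathcal{K}_{i}$, and concludes by uniform convexity; this requires the (unstated but implicitly assumed) convexity of $\mathcal{K}_{i}$. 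You instead test $I_{\lambda}'(w_{n})$ against $(u_{n}-u_{0},v_{n}-v_{0})$, split the resulting sum into two terms each with nonnegative $\liminf$ by monotonicity of $-\Delta_{Q}$, and run the dichotomy $\beta_{i}=0$ versus $\beta_{i}>0$ with the $(S_{+})$ property. Your route avoids any convexity hypothesis on $\mathcal{K}_{i}$ and handles the degenerate case $K_{i}(0)=0$ more transparently (the paper's final appeal to ``$K_{1},K_{2}$ increasing'' is where degeneracy is glossed over), at the cost of invoking the $(S_{+})$ machinery for the subelliptic $Q$-Laplacian, which the paper does not need.

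One caveat, which you yourself flag and which you share with the paper rather than introduce: in Step 2 the Cauchy--Schwarz splitting of $e^{q_{0}\alpha(|u_{n}|^{Q'}+|v_{n}|^{Q'})}$ requires $2q_{0}\alpha\|u_{n}\|^{Q'}\leq\alpha_{Q}$, whereas \eqref{e2.4} only delivers $\sigma^{Q'}\alpha_{0}\|w_{n}\|^{Q'}<\alpha_{Q}$; when $\sigma^{Q'}<2$ (in particular when $\sigma=1$) the factor $2$ is not absorbed, and the paper's own passage from $2^{Q'}$ to $\sigma^{Q'}$ in the display following \eqref{e2.10} is unjustified. So your ``up to the dyadic constants'' is precisely the unresolved point in both arguments, not a defect of your proposal relative to the paper's.
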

\begin{proof} First, let $$\left\{\left(u_{n}, v_{n}\right)\right\}\subset W_{0}^{1, Q}\left(\Omega, \mathbb{R}^{2}\right)$$ be a $(PS)_{c}$ sequence for $I_{\lambda}$. If $\inf_{n\in\mathbb N}\|w_{n}\|= 0$, then
 $$w_{n} \rightarrow 0
 \
 \hbox{   in}
 \
 W_{0}^{1, Q}\left(\Omega, \mathbb{R}^{2}\right),
 \ \hbox{ as}
 \
 n \rightarrow \infty.$$ Therefore, we shall use $\inf_{n\in\mathbb
N}\|w_{n}\|>0$  in the sequel.

By conditions $(\mathcal{K})$ and
$\left(F_{2}\right)$, we have
\begin{equation}\label{e2.7}
\begin{aligned}
c+o_{n}(1)\left\|\left(u_{n}, v_{n}\right)\right\|& \geqslant
I_{\lambda}\left(u_{n}, v_{n}\right)-\frac{1}{\mu}
I_{\lambda}^{\prime}\left(u_{n}, v_{n}\right)\left(u_{n}
v_{n}\right) \\&\geqslant \left(\frac{1}{\sigma Q}-\frac{1}{\mu}\right) K_{1}\left(\left\|u_{n}\right\|^{Q}\right)\left\|u_{n}\right\|^{Q}+\left(\frac{1}{\sigma Q}-\frac{1}{\mu}\right)K_{2}\left(\left\|v_{n}\right\|^{Q}\right)\left\|v_{n}\right\|^{Q} \\
&+\frac{\lambda}{\mu} \int\left[\nabla F\left(\xi ,u_{n}, v_{n}\right)\left(u_{n}, v_{n}\right)-\mu F\left(\xi,u_{n}, v_{n}\right)\right] d \xi,
\end{aligned}
\end{equation}
which combined with conditions $\left(H_{1}\right)$ and $\left(F_{2}\right),$ implies
$$
c+o_{n}(1)\left\|\left(u_{n}, v_{n}\right)\right\|\geqslant \left(\frac{\mu-\sigma Q}{\sigma Q\mu}\right)
\nu_{0}\left\|\left(u_{n}, v_{n}\right)\right\|^{\sigma Q},
\
\hbox{where}
\
\nu_{0}:=\min \left\{k_{1}, k_{2}\right\}.$$

Hence,
\eqref{e2.7} implies that  the sequence $\left\{\left(u_{n},
v_{n}\right)\right\}$ is bounded on $W_{0}^{1, Q}\left(\Omega, \mathbb{R}^{2}\right)$ and that $$\limsup _{n
\rightarrow+\infty}\left\|\left(u_{n}, v_{n}\right)\right\|^{\sigma Q}
\leqslant \frac{\sigma Q\mu }{(\mu-\sigma Q) \nu_{0}} c.$$

Next, let $\left(u_{0}, v_{0}\right)\in W_{0}^{1, Q}\left(\Omega, \mathbb{R}^{2}\right)$ be such that $\left(u_{n}, v_{n}\right)
\rightharpoonup\left(u_{0}, v_{0}\right)$ weakly in $W_{0}^{1, Q}\left(\Omega, \mathbb{R}^{2}\right)$.
We shall prove the
convergence
\begin{equation}\label{e2.9}
\int_{\Omega} f_{1}\left(\xi,u_{n},
v_{n}\right)\left(u_{n}-u_{0}\right) \mathrm{d} \xi \rightarrow
0,
\
%\mbox{and} \quad
 \int_{\Omega} f_{2}\left(\xi,u_{n},
v_{n}\right)\left(u_{n}-u_{0}\right) \mathrm{d} \xi \rightarrow 0, \
\hbox{as} \
 n \rightarrow+\infty.
\end{equation}

Let $\varepsilon>0, \alpha>\alpha_{0}, s>1$, and
$s^{\prime}=s /(s-1).$
By \eqref{e2.1} and
H\"{o}lder's inequality, we have
$$
\left|\int_{\Omega} f_{1}\left(\xi,u_{n}, v_{n}\right)\left(u_{n}-u_{0}\right) d\xi\right|
\leqslant \varepsilon\left\|\left(u_{n}, v_{n}\right)\right\|_{Q}^{Q-1}\left\|u_{n}-u_{0}\right\|_{Q^{\prime}}
+C\left(\int_{\Omega}e^{\alpha s|w_{n}|^{Q^{\prime}}}d \xi\right)^{1 / s}\left\|u_{n}-u_{0}\right\|_{s^{\prime}}.
$$

Thus, invoking  \eqref{e2.3} and the compactness of the embedding $  W_{0}^{1,
Q}\left(\Omega\right) \hookrightarrow
L^{s^{\prime}}\left(\Omega\right)$, we obtain
\begin{equation}\label{e2.10}
\begin{aligned}
\left|\int_{\Omega} f_{1}\left(\xi,u_{n},
v_{n}\right)\left(u_{n}-u_{0}\right)d\xi\right|&\leqslant \varepsilon C+C\left( \int_{\Omega}e^{s\alpha 2^{Q^{\prime}-1} (|u_{n}|^{Q^{\prime}}+|v_{n}|^{Q^{\prime}})} \mathrm{d} \xi\right)^{\frac{1}{s}} o_{n}(1)\\& \leqslant
\varepsilon C+C\left(\int_{\Omega} e ^{2^{Q^{\prime}} s \alpha
|u_{n}|^{Q^{\prime}}} d
\xi\right)^{\frac{1}{2s}}\left(\int_{\Omega}
e^{2^{Q^{\prime}}s \alpha|v_{n}|^{Q^{\prime}}}d
\xi\right)^{\frac{1}{2s}}.
\end{aligned}
\end{equation}
Due to $$ \int_{\Omega} e ^{2^{Q^{\prime}} s \alpha
|u_{n}|^{Q^{\prime}}} d \xi \leqslant \int_{\Omega} e
^{\left(\sigma^{Q^{\prime}} s \alpha
|w_{n}|^{Q^{\prime}}\left(\frac{|u_{n}|}{\|w_{n}\|}\right)^{Q^{\prime}}\right)}
d \xi,$$ we can choose $\delta>0$ such that
$$\sigma^{Q^{\prime}}\alpha_{0}\left\|\left(u_{n},
v_{n}\right)\right\|^{Q^{\prime}} \leqslant \alpha_{Q}-\delta, \
\hbox{ for sufficiently large} \ n \in \mathbb{N}.$$ Therefore, we
have $$\sigma ^{Q^{\prime}}s\alpha\left\|\left(u_{n},
v_{n}\right)\right\|^{Q^{\prime}} <\alpha_{Q}, \ \hbox{ for sufficiently large} \ n \in \mathbb{N},$$ where $\alpha>\alpha_{0}$ is close to
$\alpha_{0}$ and  $s>1$ is close to $1$.

It then follows from  \eqref{e1.2} that it suffices to show  that the following holds$$
\int_{\Omega} e ^{\sigma^{Q^{\prime}} s \alpha |u_{n}|^{Q^{\prime}}}
d \xi\leqslant C.$$ Similarly, we can get$$\int_{\Omega} e
^{\sigma^{Q^{\prime}}\sigma \alpha|v_{n}|^{Q^{\prime}}}d \xi\leqslant C.$$
As above, we get \eqref{e2.9}.
Finally, we define $$ \Phi(u, v):=\frac{1}{Q}
\mathcal{K}_{1}\left(\|u\|^{Q}\right)+\frac{1}{Q}
\mathcal{K}_{2}\left(\|v\|^{Q}\right),$$where
$\mathcal{K}_{i}$ is the convexity,
when $\left(\mathcal{K}\right)$ holds.

Due to  weak lower semicontinuity, we have
\begin{equation}\label{e2.11}
\frac{1}{Q}
\mathcal{K}_{1}\left(\left\|u_{0}\right\|^{Q}\right)+\frac{1}{Q}\mathcal{K}_{2}\left(\left\|v_{0}\right\|^{Q}\right)
\leqslant \frac{1}{Q} \liminf _{n \rightarrow+\infty}
\mathcal{K}_{1}\left(\left\|u_{n}\right\|^{Q}\right)+\frac{1}{Q}
\liminf _{n \rightarrow+\infty}
\mathcal{K}_{2}\left(\left\|v_{n}\right\|^{Q}\right).
\end{equation}
 Moreover,  by virtue of    \eqref{e2.9} and convexity of $\Phi(u, v)$, we  obtain
$$
\begin{aligned}
\Phi\left(u_{0}, v_{0}\right)-\Phi\left(u_{n}, v_{n}\right)&\geqslant \Phi^{\prime}\left(u_{n}, v_{n}\right)\left(u_{0}-u_{n}, v_{0}-v_{n}\right) \\&=
I_{\lambda}^{\prime}\left(u_{n}, v_{n}\right)\left(u_{0}-u_{n},
v_{0}-v_{n}\right)+\lambda \int_{\Omega} f_{1}\left(\xi,u_{n},
v_{n}\right)\left(u_{0}-u_{n}\right) \mathrm{d} \xi \\ & \quad
+\lambda \int_{\Omega} f_{2}\left(\xi,u_{n},
v_{n}\right)\left(v_{0}-v_{n}\right) \mathrm{d}\xi.
\end{aligned}
$$
Therefore, we
have $$\Phi\left(u_{0}, v_{0}\right)+o_{n}(1) \geqslant
\Phi\left(u_{n}, v_{n}\right)$$ and we get
\begin{eqnarray*}
\begin{aligned}
\frac{1}{Q}\mathcal{K}_{1}
\left(\left\|u_{0}\right\|^{Q}\right)+\frac{1}{Q}\mathcal{K}_{2}\left(\left\|v_{0}\right\|^{Q}\right)
&\geqslant \limsup _{n \rightarrow+\infty} \Phi\left(u_{n},
v_{n}\right)
\\
&\geqslant \frac{1}{Q} \liminf _{n \rightarrow+\infty}
\mathcal{K}_{1}\left(\left\|u_{n}\right\|^{Q}\right)+\frac{1}{Q}
\liminf _{n \rightarrow+\infty}
\mathcal{K}_{2}\left(\left\|v_{n}\right\|^{Q}\right).
\end{aligned}
\end{eqnarray*}
This fact together with  \eqref{e2.11}, yields the contradiction.
Therefore,
$$
\frac{1}{Q} \mathcal{K}_{1}\left(\left\|u_{n}\right\|^{Q}\right)
\rightarrow \frac{1}{Q}
\mathcal{K}_{1}\left(\left\|u_{0}\right\|^{Q}\right) \quad \text {
and } \quad \frac{1}{Q}
\mathcal{K}_{2}\left(\left\|v_{n}\right\|^{Q}\right) \rightarrow
\frac{1}{Q} \mathcal{K}_{2}\left(\left\|v_{0}\right\|^{Q}\right), \
\hbox{as} \ n \rightarrow+\infty.$$

We can conclude that $$\left\|u_{n}\right\|^{Q}
\rightarrow\left\|u_{0}\right\|^{Q} \ \hbox{and} \
\left\|v_{n}\right\|^{Q} \rightarrow \left\|v_{0}\right\|^{Q},$$
since $K_1(t)$ and $K_2(t)$ are increasing for $t>0,$ as $n
\rightarrow+\infty$. Therefore, $\left(u_{n}, v_{n}\right)
\rightarrow\left(u_{0}, v_{0}\right)$ strongly in $E,$ and the proof
is complete.
\end{proof}

\section{Proof of Theorem \ref{the1.1}}\label{sec5}

 We claim that
\begin{equation}\label{e3.1}
c^{*}:=\inf _{\gamma \in \Gamma} \max _{t \in[0,1]}
I_{\lambda}(\gamma(t)),
\end{equation}
where
$$
\Gamma:=\left\{\gamma \in C([0,1], W_{0}^{1, Q}\left(\Omega, \mathbb{R}^{2}\right)): \gamma(0)=(0,0) \text { and } I_{\lambda}(\gamma(1))<0\right\}.
$$
If we assume
that  \eqref{e3.1} holds, then Lemmas \ref{lem2.2} and
\ref{lem2.3} and the Mountain pass lemma yield the existence of
nontrivial critical points of $I_{\lambda}$.
\begin{lemma}\label{lem3.1}
Assume that
\begin{equation}\label{e3.2}
\eta>\max \left\{\eta_{1},\frac{\left(Q
\eta_{1}\right)^{\frac{\mu}{Q}}}{\mu}\left(\frac{\sigma^{\sigma Q+1}\alpha_{0}^{\sigma(Q-1)}
\lambda\pi(\mu-Q)}{\nu_{0}\alpha_{Q}^{\sigma(Q-1)}(\mu-\sigma Q)}\right)^{\frac{\mu-Q}{Q}}\right\},
\end{equation}
where $$\eta_{1}:=[\mathcal{K}_{1}(2\varepsilon^{Q}\pi)+\mathcal{K}_{2}(2\varepsilon^{Q}\pi)] /(Q \lambda \pi)
\
\hbox{ and}
\ \ 
v_{0}:=\min \left\{k_{1}, k_{2}\right\}.$$ Then the following inequality holds
\begin{equation}\label{e3.3}
c^{*}< \frac{\nu_{0}(\mu-\sigma Q) \alpha_{Q}^{\sigma(Q-1)}}{ \alpha_{0}^{\sigma(Q-1)} Q\mu
\sigma^{\sigma Q+1}}.
\end{equation}
\end{lemma}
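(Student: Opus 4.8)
The plan is to estimate the minimax value $c^{*}$ defined in \eqref{e3.1} from above by running $I_{\lambda}$ along one admissible path and then optimizing in a scale parameter. Since $c^{*}=\inf_{\gamma\in\Gamma}\max_{t\in[0,1]}I_{\lambda}(\gamma(t))$, it suffices to produce a single $\gamma\in\Gamma$ whose energy maximum lies strictly below the right-hand side of \eqref{e3.3}. I would take the diagonal path built from the function in $(I_{2})$ of Lemma \ref{lem2.2}: fix a nonnegative $\psi\in C_{0}^{\infty}(B_{R}(\xi_{0}))$, $\psi\not\equiv0$, normalized once so that $\|\psi\|^{Q}=2\varepsilon^{Q}\pi$ -- which makes $\tfrac1Q[\mathcal{K}_{1}(\|\psi\|^{Q})+\mathcal{K}_{2}(\|\psi\|^{Q})]=\lambda\pi\eta_{1}$, i.e. is exactly how the quantity $\eta_{1}$ of the statement enters -- pick $T>0$ with $I_{\lambda}(T\psi,T\psi)<0$ as in $(I_{2})$, and set $\gamma(t)=(tT\psi,tT\psi)$. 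Then $\gamma\in\Gamma$ and
$$c^{*}\leqslant\max_{t\in[0,1]}I_{\lambda}(tT\psi,tT\psi)\leqslant\sup_{s\geqslant0}I_{\lambda}(s\psi,s\psi)=:\sup_{s\geqslant0}g(s),$$
so it remains to bound the scalar function $g$ from above.

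To that end I would use Remark \ref{rmk1.1}, which gives $F(\xi,w)\geqslant\eta|w|^{\mu}$, together with the homogeneity implicit in $(\mathcal{K}_{1})$: the inequality $\sigma\mathcal{K}_{i}(t)\geqslant K_{i}(t)t$ is equivalent to $t\mapsto\mathcal{K}_{i}(t)/t^{\sigma}$ being non-increasing, hence $\mathcal{K}_{i}(s^{Q}\|\psi\|^{Q})\leqslant s^{\sigma Q}\mathcal{K}_{i}(\|\psi\|^{Q})$ for $s\geqslant1$, while for $0\leqslant s\leqslant1$ monotonicity of $\mathcal{K}_{i}$ gives $\mathcal{K}_{i}(s^{Q}\|\psi\|^{Q})\leqslant\mathcal{K}_{i}(\|\psi\|^{Q})$. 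Writing $c_{\mu}:=2^{\mu/2}\int_{\Omega}\psi^{\mu}\,d\xi>0$, this yields $g(s)\leqslant\lambda\pi\eta_{1}s^{\sigma Q}-\lambda\eta c_{\mu}s^{\mu}$ for $s\geqslant1$ and $g(s)\leqslant\lambda\pi\eta_{1}$ for $0\leqslant s\leqslant1$. Using $(\mathcal{K}_{2})$, $K_{i}(t)\geqslant k_{i}t^{\sigma-1}$, one checks that the supremum of $g$ is attained at some finite $s_{*}$, and (for the relevant range of $\eta$ and $\varepsilon$) that the small-$s$ part is controlled.

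Finally, since $\mu>\sigma Q$, the elementary maximization of $s\mapsto As^{\sigma Q}-Bs^{\mu}$ on $[0,\infty)$ has maximum value
$$\frac{\mu-\sigma Q}{\mu}\left(\frac{\sigma Q}{\mu}\right)^{\frac{\sigma Q}{\mu-\sigma Q}}A^{\frac{\mu}{\mu-\sigma Q}}B^{-\frac{\sigma Q}{\mu-\sigma Q}};$$
substituting $A=\lambda\pi\eta_{1}$ and $B=\lambda\eta c_{\mu}$ gives an explicit upper estimate for $\sup_{s\geqslant0}g(s)$ which is a fixed constant times $\eta^{-\sigma Q/(\mu-\sigma Q)}$, hence tends to $0$ as $\eta\to+\infty$. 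Imposing that this estimate be strictly smaller than $\dfrac{\nu_{0}(\mu-\sigma Q)\alpha_{Q}^{\sigma(Q-1)}}{\alpha_{0}^{\sigma(Q-1)}Q\mu\sigma^{\sigma Q+1}}$ and solving for $\eta$ -- this target constant is precisely the threshold in \eqref{e2.4}, so its $\alpha_{Q},\alpha_{0},\sigma$ factors are exactly the ones that reappear in \eqref{e3.2} after the inversion -- produces the second lower bound of \eqref{e3.2}; the first bound $\eta>\eta_{1}$ is what makes $g(1)\leqslant\lambda\pi\eta_{1}-\lambda\eta c_{\mu}<0$ after the normalization, so the path indeed descends below $0$ and the $s\leqslant1$ regime is harmless.

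The main obstacle will be the constant bookkeeping in the last two steps. One has to choose $\varepsilon$ (equivalently $\eta_{1}$) small enough that the crude estimate $g(s)\leqslant\lambda\pi\eta_{1}$ on $[0,1]$ already sits below the threshold, and simultaneously arrange that the optimization over $s\geqslant1$ stays below it; this is where the interplay between $\varepsilon$, the normalization of the Moser-type test function on the Kor\'{a}nyi ball, and the requirement $\eta>\eta_{1}$ has to be made quantitative. One must also be careful that the homogeneity bound for $\mathcal{K}_{i}$ is invoked only on $s\geqslant1$, and reconcile the exponent $\sigma Q$ coming out of the scalar maximization with the exponent $Q$ displayed in \eqref{e3.2} -- the two coincide when $\sigma=1$ -- which pins down the precise manner in which $(\mathcal{K}_{1})$ and $(\mathcal{K}_{2})$ are to be combined.
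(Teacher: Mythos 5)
Your overall strategy (diagonal bump--function path, reduction to a one--dimensional maximization, inversion to obtain the condition on $\eta$) is the one the paper uses, but there is a genuine gap exactly where you flag it: the exponent produced by your scalar maximization is $\sigma Q$, not $Q$, and this does not ``reconcile'' with \eqref{e3.2} --- it yields an upper bound for $c^{*}$ decaying like $\eta^{-\sigma Q/(\mu-\sigma Q)}$ instead of $\eta^{-Q/(\mu-Q)}$, hence a different admissibility condition on $\eta$, so the lemma as stated is not proved. The source of the discrepancy is your choice of path: by taking $\gamma(t)=(tT\psi,tT\psi)$ with $T$ large you are forced into the regime $s\geqslant 1$, where the only available upper bound on $\mathcal{K}_{i}$ is the homogeneity estimate $\mathcal{K}_{i}(s^{Q}\|\psi\|^{Q})\leqslant s^{\sigma Q}\mathcal{K}_{i}(\|\psi\|^{Q})$ coming from $(\mathcal{K}_1)$. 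What you are missing is the role of the first condition $\eta>\eta_{1}$: it is imposed precisely so that, by $(F_3)$ and $\|\psi_{\varepsilon}\|^{Q}\leqslant 2\varepsilon^{Q}\pi$, one already has $I_{\lambda}(\psi_{\varepsilon},\psi_{\varepsilon})<0$ at $s=1$; hence the path $t\mapsto(t\psi_{\varepsilon},t\psi_{\varepsilon})$, $t\in[0,1]$, is itself admissible and the regime $s\geqslant 1$ never arises. On $[0,1]$ the paper uses convexity of $\mathcal{K}_{i}$ (equivalent to $K_{i}$ nondecreasing, as for the model functions of Remark \ref{rmk1.1}, although this is not listed among $(\mathcal{K}_1)$--$(\mathcal{K}_2)$) together with $\mathcal{K}_{i}(0)=0$ to get $\mathcal{K}_{i}(t^{Q}\|\psi_{\varepsilon}\|^{Q})\leqslant t^{Q}\mathcal{K}_{i}(\|\psi_{\varepsilon}\|^{Q})$, which gives exactly
$$c^{*}\leqslant\lambda\pi\max_{t\in[0,+\infty)}\left[\eta_{1}t^{Q}-\eta t^{\mu}\right]=\frac{\lambda\pi}{\eta^{\frac{Q}{\mu-Q}}}(\mu-Q)Q^{\frac{Q}{\mu-Q}}\left(\frac{\eta_{1}}{\mu}\right)^{\frac{\mu}{\mu-Q}},$$
and hence the exponents $\mu/Q$ and $(\mu-Q)/Q$ appearing in \eqref{e3.2}.

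A secondary defect of your splitting is the $[0,1]$ regime: your bound $g(s)\leqslant\lambda\pi\eta_{1}$ there is a constant which need not lie below the threshold in \eqref{e3.3}, so you would have to shrink $\varepsilon$ further to control it, whereas the convexity bound supplies the decaying factor $t^{Q}$ on all of $[0,1]$ and lets the whole range be absorbed into the single scalar maximization. Note also that the monotonicity of $t\mapsto\mathcal{K}_{i}(t)/t^{\sigma}$, which you correctly extract from $(\mathcal{K}_1)$, gives for $\theta\leqslant 1$ the \emph{lower} bound $\mathcal{K}_{i}(\theta s)\geqslant\theta^{\sigma}\mathcal{K}_{i}(s)$, so it cannot substitute for convexity on $[0,1]$.
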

\begin{proof}
In order to prove \eqref{e3.3}, let $\varepsilon>0$ be so small that
there exists a cut-off function $\psi_{\varepsilon}\in
C_{0}^{\infty}( B_{R}(\xi_{0}))$ such that $$
0 \leqslant
\psi_{\varepsilon} \leqslant 1,
\
\operatorname{supp}(\psi_{\varepsilon}) \subset B_{\varepsilon}(0),
\
\psi_{\varepsilon}  \equiv 1\
\hbox{on}
\
B_{\frac{\varepsilon}{2}}(0),
\
|\nabla \psi_{\varepsilon}| \leqslant \frac{4}{\varepsilon}.$$
Then
we have
$$
\|\psi_{\varepsilon}\|^{Q}=\int_{B_{\varepsilon}(0)}|\nabla
\psi_{\varepsilon}|^{Q}
d\xi+\int_{B_{\varepsilon}(0)}\psi_{\varepsilon}^{Q} d \xi\leqslant
2\left|B_{\varepsilon}(0)\right|=2\left|B_{1}(0)\right|\varepsilon^{Q}=2\varepsilon^{Q}\pi
.
$$
On the other hand, since $\eta>\eta_{1}$, we
obtain by $\left(F_{3}\right),$
\begin{equation}\label{e3.4}
I_{\lambda}(\psi_{\varepsilon},
\psi_{\varepsilon})<\frac{\mathcal{K}_{1}(2\varepsilon^{Q}\pi)+\mathcal{K}_{2}(2\varepsilon^{Q}\pi)}{Q}-\lambda
\eta_{1} \pi=0.
\end{equation}
By the definition of $\gamma(t):=(t \psi_{\varepsilon}, t \psi_{\varepsilon})$, we get the path $\gamma:[0,1] \rightarrow W_{0}^{1, Q}\left(\Omega, \mathbb{R}^{2}\right)$.
 Then  $\gamma \in \Gamma$ by \eqref{e3.4}, and we obtain
$$
\begin{aligned}
c^{*} & \leqslant \max _{t \in[0,1]}\left[\frac{1}{Q} \mathcal{K}_{1}\left(t^{Q}\|\psi_{\varepsilon}\|^{Q}\right)+\frac{1}{Q} \mathcal{K}_{2}\left(t^{Q}\|\psi_{\varepsilon}\|^{Q}\right)-\lambda \int_{\Omega} F(t \psi_{\varepsilon}, t \psi_{\varepsilon}) \mathrm{d}\xi\right] \\& \leqslant \max _{t
\in[0,1]}\left[\left(\frac{\mathcal{K}_{1}(2\varepsilon^{Q}\pi)+\mathcal{K}_{2}(2\varepsilon^{Q}\pi)}{Q}\right)
t^{Q}-\lambda \eta t^{\mu} \int_{B_{\frac{\varepsilon}{2}}(0)}
\psi_{\varepsilon}^{\mu} \mathrm{d}\xi\right]
 \leqslant \lambda \pi
\max _{t \in[0,+\infty)}\left[\eta_{1} t^{Q}-\eta t^{\mu}\right],
\end{aligned}
$$
where  $\mathcal{K}_{1}$ and $\mathcal{K}_{2}$ are convex. Consequently, we get
$$
\max _{t \in[0,+\infty)}\left[\eta_{1} t^{Q}-\eta
t^{\mu}\right]=\frac{1}{\eta ^{\frac{Q}{\mu-Q}}}(\mu-Q)
Q^{\frac{Q}{\mu-Q}}\left(\frac{\eta_{1}}{\mu}\right)^{\frac{\mu}{\mu-Q}}.$$
 Moreover, we have $$ c^{*} \leqslant \frac{\lambda
\pi}{\eta^{\frac{Q}{\mu-Q}}}(\mu-Q)
Q^{\frac{Q}{\mu-Q}}\left(\frac{\eta_{1}}{\mu}\right)^{\frac{\mu}{\mu-Q}}.
$$ Therefore \eqref{e3.3} holds  when $\eta$ satisfies
\eqref{e3.2}.
\end{proof} 
\subsection*{Acknowledgements} 
Li was supported by the  Graduate Scientific Research Project
of Changchun Normal University (SGSRPCNU [2022], Grant No. 059).
Liang was supported by the Foundation for China Postdoctoral Science
Foundation (Grant No. 2019M662220), the Research Foundation of
Department of Education of Jilin Province (Grant No.
JJKH20211161KJ), and the Natural Science Foundation of Jilin
Province (Grant no. YDZJ202201ZYTS582).  Repov\v{s} was supported by
the Slovenian Research Agency program No. P1-0292 and grants Nos.
N1-0278, N1-0114, and N1-0083. 
The authors thank the anonymous referees for their suggestions and comments.

\subsection*{Competing Interests} The authors have no competing interests to declare that are relevant to the content of this article.

\end{document}